\theoremstyle{plain}
\newtheorem{theorem}{Theorem}[section]
\newtheorem{lemma}[theorem]{Lemma}
\newtheorem{proposition}[theorem]{Proposition}
\theoremstyle{definition}
\newtheorem{definition}[theorem]{Definition}
\newtheorem{remark}[theorem]{Remark}
\numberwithin{equation}{section}
\DeclareMathOperator{\Ad}{Ad}
\begin{document}

\title{Approximately Multiplicative Decompositions of Nuclear Maps}

\thanks{}

\author[D.\ Wagner]{Douglas A.\ Wagner}

\address{\hskip-\parindent Douglas A.\ Wagner. Department of
  Mathematics, Texas Christian University, Fort Worth, TX, 76129, USA.}
\email{Douglas.Wagner@TCU.edu}


\begin{abstract}
    We expand upon work from many hands on the decomposition of nuclear maps. Such maps can be characterized by their ability to be approximately written as the composition of maps to and from matrices. Under certain conditions (such as quasidiagonality), we can find a decomposition whose maps behave nicely, by preserving multiplication up to an arbitrary degree of accuracy and being constructed from order zero maps (as in the definition of nuclear dimension). We investigate these conditions and relate them to a W*-analog. 
\end{abstract} 

\date{\today}

\maketitle

\section{Background}


The germinal idea of this investigation was the definition of \emph{nuclear dimension} in \cite[Def. 2.1.]{WinterZacharias}, which quantifies the \textit{completely positive approximation property} (CPAP) of nuclear maps. (Although this notion goes back even further to \cite{KirchbergWinter}'s \emph{decomposition rank}, and so on.)

\begin{definition}\cite[Ch. 2]{BrownOzawa} \label{df:nuclear}
	A contractive completely positive (c.c.p.) map $\pi:A\rightarrow B$ between C*-algebras (or to a von Neumann algebra $B$) is called \textbf{nuclear} (resp. \textbf{weakly nuclear}) if it admits a certain decomposition: there exists a net $(F_n)$ of finite-dimensional C*-algebras and c.c.p. maps  $A\stackrel{\psi_n}{\longrightarrow}F_n\stackrel{\varphi_n}{\longrightarrow}B$
	such that $(\varphi_n\circ\psi_n)$ converges to $\pi$ in the point-norm (resp. point-$\sigma$-weak) topology. It is this decomposition to which the ``CPA'' in CPAP refers. 
	If $\pi$ is the identity, then we say that $A$ itself is \textbf{nuclear}; if $\pi$ is an inclusion, we say $A$ is \textbf{exact}.
\end{definition}

Methods for strengthening the CPAP were first explored in \cite{HirshbergKirchbergWhite}. This was synthesized with \cite{BlackadarKirchberg}'s results on quasidiagonal nuclear C*-algebras into \cite{BrownCarrionWhite}, which was built upon by \cite{CarrionSchafhauser}. This paper builds further by providing a partial answer to the final question in \cite{CarrionSchafhauser}.

\cite[Thm. 1.4]{HirshbergKirchbergWhite} examined the case of nuclear $A$, showing that we may then find $\varphi_n:F_n\rightarrow A$ (as in Definition~\ref{df:nuclear}) that are convex combinations of finitely many c.c.p. order zero maps. \cite[Thm. 3.1]{BrownCarrionWhite} then showed the net $(\psi_n:A\rightarrow F_n)$ may be chosen to be approximately order zero. Furthermore, drawing from \cite{BlackadarKirchberg}, if both $A$ and all of its traces are quasidiagonal, then $(\psi_n)$ may be chosen to be approximately multiplicative. In fact, \cite[Thm. 2.2]{BrownCarrionWhite} shows that the converse is true as well.

Using the result from \cite{HirshbergKirchbergWhite} as their starting point, \cite{CarrionSchafhauser} began the process of generalizing to non-nuclear $A$: so long as the nuclear map $\pi$ is order zero, we may find $(\psi_n)$ that is approximately order zero and $\varphi_n$ that are convex combinations of c.c.p. order zero maps \cite[Thm. 1]{CarrionSchafhauser}.
Additionally, approximate multiplicativity of $(\psi_n)$ is attainable for weakly nuclear $\pi$ with quasidiagonal $A$ \cite[Prop. 3]{CarrionSchafhauser} .

The question is, then, ``What is necessary or sufficient to get approximate multiplicativity for nuclear $\pi$?'' 
This paper's results are two necessary conditions.
\begin{lemma}\label{lm:trace}
	Let $\pi:A\rightarrow B$ be a \textasteriskcentered-homomorphism between C*-algebras $A,B$ that admits an approximately multiplicative norm-decomposition (see Definition~\ref{df:AMND}), and $\tau$ be a trace on $\pi(A)$. Then $\tau\circ\pi$ is a quasidiagonal trace on $A$.
\end{lemma}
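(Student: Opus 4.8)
The plan is to feed the data of the approximately multiplicative norm-decomposition into the definition of a quasidiagonal trace, namely a net of approximately multiplicative c.c.p. maps into matrix algebras along which the normalized traces converge to $\tau\circ\pi$. First I would record the trivial reductions. Since $\pi$ is a $*$-homomorphism and $\tau$ is a trace, $\tau\circ\pi(ab)=\tau(\pi(a)\pi(b))=\tau(\pi(b)\pi(a))=\tau\circ\pi(ba)$, so $\tau\circ\pi$ is at least a trace on $A$. Extending $\tau$ to a state $\tilde\tau$ on $B$ by Hahn--Banach, I would write the decomposition as $A\xrightarrow{\psi_n}F_n\xrightarrow{\varphi_n}B$ with $\psi_n$ approximately multiplicative c.c.p., $F_n$ finite-dimensional, and $\varphi_n\psi_n\to\pi$ point-norm, and set $\sigma_n:=\tilde\tau\circ\varphi_n$, a state on $F_n$. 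Then $|\sigma_n(\psi_n(a))-\tau\pi(a)|\le\|\varphi_n\psi_n(a)-\pi(a)\|\to0$, so $\sigma_n\circ\psi_n\to\tau\circ\pi$ pointwise. Combining approximate multiplicativity of $\psi_n$ with the trace identity for $\tau\circ\pi$ yields the crucial asymptotic traciality on the image: $\sigma_n\bigl(\psi_n(a)\psi_n(b)-\psi_n(b)\psi_n(a)\bigr)\to0$ for all $a,b\in A$.

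Next I would convert each pair $(F_n,\sigma_n)$ into matricial data. Decomposing $F_n=\bigoplus_j M_{k_j}$ and writing $\sigma_n$ on the $j$-th block as $x\mapsto\operatorname{Tr}(D_j x)$, I would diagonalize each density matrix $D_j$ (absorbing the diagonalizing unitaries into an inner automorphism, which does not affect multiplicativity) and, for a large integer $N_n$, assign to the $i$-th diagonal entry the multiplicity $m_{j,i}\approx\lambda_{j,i}N_n$. A standard multiplicity/spreading construction then produces a c.c.p. map $\rho_n\colon F_n\to M_{N_n}$ sending each diagonal matrix unit to an orthogonal projection of the prescribed rank and each off-diagonal unit to a partial isometry between the corresponding ranges, arranged so that $\operatorname{tr}_{N_n}\circ\rho_n\to\sigma_n$ pointwise. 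Setting $\Theta_n:=\rho_n\circ\psi_n\colon A\to M_{N_n}$, the normalized traces satisfy $\operatorname{tr}_{N_n}(\Theta_n(a))=\operatorname{tr}_{N_n}(\rho_n\psi_n(a))\to\tau\circ\pi(a)$, so it remains only to verify that $\Theta_n$ is approximately multiplicative; granting this, $\tau\circ\pi$ meets the definition of a quasidiagonal trace and the lemma follows.

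The main obstacle is precisely the approximate multiplicativity of $\Theta_n$, equivalently of $\rho_n$ restricted to the image $\psi_n(A)$. Since $\psi_n$ is already approximately multiplicative, the question reduces to whether the partial isometries implementing the off-diagonal matrix units compose correctly on the image --- and a partial isometry forces $v^*v$ and $vv^*$ to have equal rank. Here is where the trace hypothesis does the real work: if $\psi_n(a)$ approximates an off-diagonal matrix unit $e_{il}$, then $\psi_n(a^*a)\approx e_{ll}$ and $\psi_n(aa^*)\approx e_{ii}$, so $\tau\pi(a^*a)=\tau\pi(aa^*)$ forces the weights $\lambda_{j,i}$ and $\lambda_{j,l}$, hence the multiplicities $m_{j,i}$ and $m_{j,l}$, to agree asymptotically. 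Thus asymptotic traciality is exactly the rank-matching condition that makes the partial isometries of the spreading construction compose multiplicatively on $\psi_n(A)$, and I expect the bulk of the work to lie in making this matching quantitative and uniform over a given finite subset of $A$, so that the multiplicativity defect of $\rho_n$ on the image is controlled by the (vanishing) commutator functionals $\sigma_n([\psi_n(a),\psi_n(b)])$.
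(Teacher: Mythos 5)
Your proposal diverges from the paper at the decisive point: you never use condition (iii) of Definition~\ref{df:AMND}, that each $\varphi_n$ is a convex combination of u.c.p.\ order zero maps, and that condition is the entire content of the lemma. The paper's proof is essentially one line: by \cite[Cor.~3.4]{WinterZacharias} the composition of a trace with an order zero map is (a rescaling of) a trace, so each $\sigma_n:=\tau\circ\varphi_n$ is a genuine tracial state on $F_n$, and then $(\psi_n,\sigma_n)$ directly witnesses the quasidiagonality of $\tau\circ\pi$. Your second paragraph (the multiplicity/spreading construction converting traces on finite-dimensional algebras into normalized matrix traces) is the standard and correct device for the last cosmetic step, but it is valid only because honest traciality of $\sigma_n$ forces the density matrices $D_j$ to be scalar on each block, so all multiplicities $m_{j,i}$ within a block coincide and the matrix units map to an exactly multiplicative system. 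Since you only extend $\tau$ to a state $\tilde\tau$ and set $\sigma_n=\tilde\tau\circ\varphi_n$, your $\sigma_n$ is a mere state, and your construction has to contend with genuinely mismatched block weights.

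Your proposed repair --- that asymptotic traciality of $\sigma_n$ on $\psi_n(A)$ forces the relevant ranks to match --- does not work. First, its hypothesis is typically vacuous: no element of $\psi_n(A)$ need approximate any matrix unit $e_{il}$, and traciality of $\sigma_n$ on the subalgebra generated by the image puts no constraint on the eigenvalues of $D_j$ in the directions your partial isometries actually use. Second, your final claim that the multiplicativity defect of $\rho_n$ on the image is controlled by the commutator functionals $\sigma_n\bigl([\psi_n(a),\psi_n(b)]\bigr)$ is false: take $F_n=\mathbb{M}_2$, $\sigma_n=\mathrm{Tr}(D\,\cdot)$ with $D=\mathrm{diag}(\lambda,1-\lambda)$, $\lambda>1/2$, and $\psi_n:\mathbb{C}^2\to\mathbb{M}_2$ the unital \textasteriskcentered-homomorphism onto the span of $1$ and the symmetry $u=e_{12}+e_{21}$. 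The image is abelian, so every commutator functional vanishes identically and $\sigma_n$ is exactly tracial on it, yet $\rho_n(u)=v+v^*$ with $v$ a partial isometry of rank $\min(m_1,m_2)$, whence $\bigl\|\rho_n(u)^2-\rho_n(u^2)\bigr\|=\bigl\|1-(vv^*+v^*v)\bigr\|=1$. Third, and conclusively, no argument using only your data can succeed: your proof consumes nothing about $(\varphi_n)$ beyond the existence of states $\sigma_n$ on $F_n$ with $\sigma_n\circ\psi_n\to\tau\circ\pi$ pointwise. But for $A=C^*_r(F_2)$, Haagerup--Thorbjørnsen supply approximately multiplicative u.c.p.\ maps $\psi_n:A\to\mathbb{M}_{k_n}$, and a Hahn--Banach separation argument then produces states $\sigma_n$ with $\sigma_n\circ\psi_n\to\tau_\lambda$, the canonical trace; if your scheme worked it would exhibit $\tau_\lambda$ as quasidiagonal, contradicting the facts that quasidiagonal traces are amenable and that the canonical trace on $C^*_r(F_2)$ is not (as $F_2$ is non-amenable). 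So the order-zero hypothesis is not a convenience to be routed around; any correct proof must use it, and once you do, your spreading construction becomes exactly the routine finishing move.
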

\begin{theorem}\label{th:nonexact}
	Let $\pi:A\rightarrow B\subseteq B^{**}$ be a \textasteriskcentered-homomorphism that admits an approximately multiplicative $\sigma$-strong*-decomposition (see Definition~\ref{df:AMSSD}). 
	Then the inclusion $\pi(A)\subseteq B$ is nuclear.
\end{theorem}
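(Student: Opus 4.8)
The plan is to reduce the statement to a \emph{weak} nuclearity assertion and then upgrade it, using the classical fact (see, e.g., \cite{BrownOzawa}) that a c.c.p.\ map into a C*-algebra $B$ is nuclear precisely when the induced map into the enveloping von Neumann algebra $B^{**}$ is weakly nuclear, i.e.\ a point-$\sigma$-weak limit of c.c.p.\ maps factoring through finite-dimensional algebras. Concretely, I would first show that the inclusion $\iota\colon\pi(A)\hookrightarrow B\subseteq B^{**}$ is weakly nuclear, and then invoke the upgrade: since each $F_n$ is finite-dimensional, any c.c.p.\ map $F_n\to B^{**}$ is a point-$\sigma$-strong$^*$ limit of c.c.p.\ maps $F_n\to B$ (Kaplansky density applied to the finitely many images of a basis), so the approximants may be taken to land in $B$; and because the family of c.c.p.\ maps $\pi(A)\to B$ factoring through finite-dimensional algebras is convex, a point-$\sigma$-weak limit of such maps is automatically a point-norm limit by Hahn--Banach/Mazur. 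This is the routine half.

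The substance is producing the weakly nuclear approximation of $\iota$ from the given decomposition $\varphi_n\circ\psi_n\to\pi$ of Definition~\ref{df:AMSSD}. The difficulty is that the contractions $\psi_n\colon A\to F_n$ live on $A$, whereas I need \emph{down}-maps on $\pi(A)\cong A/\ker\pi$; when $\pi$ fails to be injective one must quotient by $J:=\ker\pi$. To do this honestly, I would pass to the bidual: the normal extension $\pi^{**}\colon A^{**}\to B^{**}$ is a surjection onto $\pi(A)''$ with central kernel projection $1-z$, so inverting $\pi^{**}$ on $zA^{**}$ yields a normal $\ast$-isomorphism $\rho\colon\pi(A)''\to zA^{**}$, in particular a c.c.p.\ map $\rho\colon\pi(A)\to A^{**}$ with $\rho(\pi(a))=za$. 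Setting $\alpha_n:=\psi_n^{**}\circ\rho\colon\pi(A)\to F_n$ then gives genuine c.c.p.\ down-maps, and the pair $(\alpha_n,\varphi_n)$ is the candidate approximation of $\iota$.

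It remains to verify $\varphi_n\circ\alpha_n\to\iota$ in the point-$\sigma$-strong$^*$ topology, that is, $\varphi_n\psi_n^{**}(za)\to\pi(a)$. Since $\varphi_n\psi_n(a)=\varphi_n\psi_n^{**}(a)\to\pi(a)$ already, this reduces to showing $\varphi_n\psi_n^{**}((1-z)a)\to 0$. Here $(1-z)a$ lies in $J^{**}$, and I would approximate it by honest ideal elements using a quasicentral approximate unit $(e_\lambda)$ for $J$: one has $e_\lambda^{1/2}ae_\lambda^{1/2}\to(1-z)a$ $\sigma$-strongly, each $e_\lambda^{1/2}ae_\lambda^{1/2}\in J$, and $\varphi_n\psi_n(e_\lambda^{1/2}ae_\lambda^{1/2})\to_n\pi(e_\lambda^{1/2}ae_\lambda^{1/2})=0$ for every fixed $\lambda$. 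This is exactly where approximate multiplicativity of $\psi_n$ does its work: peeling the cutdowns $e_\lambda^{1/2}$ out of $\psi_n(e_\lambda^{1/2}ae_\lambda^{1/2})$, up to errors vanishing in $n$, and using $\varphi_n\psi_n(e_\lambda^{1/2})\to\pi(e_\lambda^{1/2})=0$ produces estimates uniform enough to survive a diagonal reindexing $\lambda=\lambda(n)\to\infty$, so that $\psi_n^{**}((1-z)a)$ is well approximated by $\psi_n(e_{\lambda(n)}^{1/2}ae_{\lambda(n)}^{1/2})$ while its image under $\varphi_n$ still tends to $0$.

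The main obstacle I anticipate is precisely this interchange of limits: the normal identity $\psi_n^{**}((1-z)a)=\lim_\lambda\psi_n(e_\lambda^{1/2}ae_\lambda^{1/2})$ holds for each fixed $n$, whereas the vanishing $\varphi_n\psi_n(\,\cdot\,)\to 0$ holds for each fixed ideal element, and reconciling the two forces the approximate-multiplicativity bounds to be quantitative and uniform across the net. Controlling those constants, and checking that the diagonal choice keeps each $\alpha_n$ c.c.p.\ and keeps the convergence $\sigma$-strong$^*$ rather than merely $\sigma$-weak, is the technical heart of the argument; once it is in place, weak nuclearity of $\iota$ follows and the upgrade of the first paragraph delivers nuclearity of the inclusion $\pi(A)\subseteq B$.
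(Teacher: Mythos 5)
There is a genuine gap, and it is fatal rather than merely technical: your central claim, that $\varphi_n\circ\psi_n^{**}\big((1-z)a\big)\to 0$ (equivalently $\varphi_n\circ\alpha_n\to\iota$ for $\alpha_n=\psi_n^{**}\circ\rho$), is simply false under the hypotheses, so no amount of quantitative control over the diagonal reindexing can rescue it. Take $A=C[0,1]$, $B=\Comp$, $\pi=\mathrm{ev}_0$, and pick $t_n\in(0,1]$ with $t_n\to 0$. Set $F_n=\Comp^2$, $\psi_n(f)=\big(f(0),f(t_n)\big)$, and $\varphi_n(c_0,c_1)=c_1$. Then $\psi_n$ is \emph{exactly} multiplicative, each $\varphi_n$ is a unital \textasteriskcentered-homomorphism, and $\varphi_n\circ\psi_n(f)=f(t_n)\to f(0)=\pi(f)$, so all of (i)--(iii) of Definition~\ref{df:AMSSD} hold. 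Here $z$ is the atom of $A^{**}$ at $0$, so $zf=f(0)z$ and $\psi_n^{**}(zf)=\big(f(0),0\big)$, whence $\varphi_n\circ\psi_n^{**}(zf)=0$ for every $f$: your candidate approximants converge to the zero map, not to the inclusion. The interchange of limits you flag as ``the technical heart'' is exactly where this dies --- for each fixed $\lambda$ one indeed has $\varphi_n\psi_n(e_\lambda^{1/2}ae_\lambda^{1/2})\to_n 0$, but $\psi_n^{**}\big((1-z)a\big)$ requires $\lambda=\lambda(n)$ to outrun $n$, and hypothesis (ii) gives no uniformity in $\lambda$; the counterexample shows the two iterated limits genuinely disagree. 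Structurally, there is no reason $\varphi_n$ should ``see'' the corner of $F_n$ in which the compression $\psi_n^{**}(z\,\cdot)$ lives.

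The paper avoids this trap by never constructing down-maps on $\pi(A)$ at all, and the key is hypothesis (iii), which your proposal never invokes. Since each $\varphi_n$ is a \textasteriskcentered-homomorphism, $\varphi_n(F_n)$ is a finite-dimensional C*-subalgebra of $B^{**}$, so Arveson's Extension Theorem furnishes conditional expectations $E_n:B^{**}\rightarrow\varphi_n(F_n)$. The restrictions $E_n|_{\pi(A)}$ already factor through finite-dimensional algebras, and since $E_n$ \emph{fixes} $\varphi_n\circ\psi_n(a)\approx\pi(a)$, an Alaoglu cluster-point argument yields $\eta\big(E_n(\pi(a))-\pi(a)\big)\to 0$ for normal $\eta$, i.e.\ weak nuclearity of $\pi(A)\subseteq B^{**}$, without ever touching $\ker\pi$, $A^{**}$, or quasicentral approximate units (indeed, the paper's proof does not even use the approximate multiplicativity of the $\psi_n$ here). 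In your example this route gives $E_n=\id_\Comp$, which works where your $\alpha_n$ fail. Your opening paragraph --- convexity of the finite-dimensionally factorable maps plus Hahn--Banach to upgrade point-$\sigma$-weak to point-norm, and pushing the maps from $B^{**}$ down into $B$ --- does match the paper's second half; the defect lies entirely in how you try to produce the weakly nuclear approximation.
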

We also reach a full characterization in the exact case, which stems from a more general sufficient condition given in Proposition~\ref{pr:locallysplit}.
\begin{theorem}\label{th:exact}
	Let $A$ be an exact C*-algebra and $\pi:A\rightarrow B$ be a \textasteriskcentered-homomorphism to another C*-algebra $B$. Then $\pi$ admits an approximately multiplicative norm-decomposition iff it is nuclear and quasidiagonal and $\tau\circ\pi$ is a quasidiagonal trace on $A$ for every trace $\tau$ on $\pi(A)$.
\end{theorem}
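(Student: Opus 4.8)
The plan is to prove the two implications separately, with exactness entering only in the ``if'' direction.

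For the ``only if'' direction, suppose $\pi$ admits an approximately multiplicative norm-decomposition $A \xrightarrow{\psi_n} F_n \xrightarrow{\varphi_n} B$ as in Definition~\ref{df:AMND}. Nuclearity is immediate: discarding the approximate multiplicativity of $(\psi_n)$, this is already a completely positive approximation converging to $\pi$ in point-norm, so $\pi$ is nuclear in the sense of Definition~\ref{df:nuclear}. The trace condition is exactly the content of Lemma~\ref{lm:trace}: for every trace $\tau$ on $\pi(A)$, the composite $\tau\circ\pi$ is a quasidiagonal trace on $A$. Finally, quasidiagonality of $\pi$ is witnessed by the maps $(\psi_n)$ themselves --- decomposing each finite-dimensional $F_n$ into its matrix summands yields approximately multiplicative c.c.p. maps into matrix algebras, and contractivity of the $\varphi_n$ together with the point-norm convergence $\varphi_n\psi_n\to\pi$ forces $\liminf_n\|\psi_n(a)\|\ge\|\pi(a)\|$, which combined with approximate multiplicativity gives the asymptotic isometry relative to $\pi$ that quasidiagonality demands. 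Note that exactness plays no role in this direction.

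For the ``if'' direction, assume $\pi$ is nuclear, quasidiagonal, and satisfies the trace condition, and that $A$ is exact. The strategy is to reduce to the general sufficient condition of Proposition~\ref{pr:locallysplit}: beyond nuclearity, quasidiagonality, and the trace condition, that proposition requires a local-splitting hypothesis on the inclusion $\pi(A)\subseteq B$ (the sufficient-side counterpart of the nuclear-inclusion \emph{conclusion} of Theorem~\ref{th:nonexact}). The role of exactness is precisely to supply this hypothesis. Since $\pi(A)$ is a quotient of the exact algebra $A$ it is itself exact, and exactness is what allows the inclusion $\pi(A)\subseteq B$ to be locally lifted/split through finite-dimensional operator systems. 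Feeding this into Proposition~\ref{pr:locallysplit} then produces the desired approximately multiplicative norm-decomposition, closing the equivalence.

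I expect the main obstacle to be exactly this last step: matching the exactness of $A$ (equivalently, of $\pi(A)$) to the precise local-splitting hypothesis of Proposition~\ref{pr:locallysplit}, and in particular controlling the inclusion into $B$ itself rather than into a faithful representation on Hilbert space, where nuclear embeddability is automatic but need not transfer back into the C*-subalgebra $B$. A secondary point requiring care is to reconcile the exact formulations of quasidiagonality and of the trace hypothesis used here with those assumed in Proposition~\ref{pr:locallysplit}; once these are aligned, the necessity results from the ``only if'' direction (Lemma~\ref{lm:trace} and the quasidiagonality observation) confirm that no hypothesis has been silently lost, so the two implications genuinely assemble into the stated characterization.
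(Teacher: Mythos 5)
Your ``only if'' direction contains the genuine gap, precisely at the point where you declare that ``exactness plays no role in this direction.'' In this paper, quasidiagonality of a \textasteriskcentered-homomorphism (Definition~\ref{df:qdsh}) means factoring through a quasidiagonal C*-\emph{algebra}; it is not defined by the existence of approximately multiplicative u.c.p.\ maps into matrices that are ``asymptotically isometric relative to $\pi$.'' Your observation that $\liminf_n\|\psi_n(a)\|\geq\|\pi(a)\|$ is correct (since $\|\varphi_n\circ\psi_n(a)\|\leq\|\psi_n(a)\|$), and together with approximate multiplicativity it yields a \textasteriskcentered-homomorphism $\Psi|_A:A\rightarrow\prod_\omega F_n$ with $\ker\Psi\subseteq\ker\pi$, so that $\pi$ factors through $D:=\Psi(A)$. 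But that only exhibits $D$ as a separable C*-subalgebra of an ultraproduct of finite-dimensional algebras, and such subalgebras need not be quasidiagonal: the coordinate maps do not descend to well-defined c.c.p.\ maps on $D$, so Voiculescu's characterization cannot be applied directly. To produce c.c.p.\ maps on $D$ itself one must lift the inclusion $D\subseteq\prod_\omega F_n$ to a c.c.p.\ map into $\prod_n F_n$, and the Choi--Effros Lifting Theorem \cite[Thm. C.3]{BrownOzawa} requires that inclusion to be \emph{nuclear}. Establishing that nuclearity is exactly where the paper spends its exactness hypothesis, following \cite[Thm. 4.8]{Dadarlat97}: embed $A$ nuclearly in some $C$, extend the $\psi_n$ to $C$ by Arveson, and use u.c.p.\ local liftings of the quotient map $A\rightarrow\Psi(A)$ (available because every extension of an exact algebra is locally split) to verify that $\Psi(A)\subseteq\prod_\omega F_n$ is nuclear. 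Without exactness this step is not known to go through, so your direct argument does not establish quasidiagonality of $\pi$ in the sense the theorem asserts.

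Your ``if'' direction is right in spirit---exactness supplies the local-splitting hypothesis of Proposition~\ref{pr:locallysplit}---but you attach that hypothesis to the wrong map. The proposition requires the extension $0\rightarrow\ker\pi_1\rightarrow A\rightarrow D\rightarrow 0$ to be locally split, where $\pi_1:A\rightarrow D$ is the quotient onto the quasidiagonal algebra through which $\pi$ factors; there is no hypothesis on the inclusion $\pi(A)\subseteq B$ (nuclearity of that inclusion is the \emph{conclusion} of Theorem~\ref{th:nonexact}, a necessary condition, not an input to the sufficiency result). Since $A$ is exact, every such extension is locally split \cite[Prop. 9.1.4]{BrownOzawa}, so the hypothesis is automatic; your worries about ``controlling the inclusion into $B$'' and your appeal to exactness passing to the quotient $\pi(A)$ (a deep theorem of Kirchberg, and irrelevant here) are misdirected. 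One last bookkeeping point: Proposition~\ref{pr:locallysplit} produces an approximately multiplicative decomposition of $\pi:A\rightarrow B^{**}$, i.e.\ a $\sigma$-strong*-decomposition, and it is Proposition~\ref{pr:different} that converts this into the norm-decomposition the theorem claims; your sketch elides this conversion.
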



We shall make use that the $\sigma$-strong* topology on a von Neumann algebra $N$ agrees on bounded subsets with the topology generated by seminorms
$$\|x\|_{\rho}:= \sqrt{\rho\left(\frac{x^*x+xx^*}{2}\right)}$$
for a separating family of normal states $\rho$ on $N$ (see e.g. \cite[III.2.2.19]{Blackadar06}).

For simplicity, we shall assume that our C*-algebras are separable and unital. 

\section{Results}\label{sc:epsilon}

Our results are only possible through use of the following as-yet unpublished result, which is something of a folklore theorem (cf. \cite{CETW}). 
We thank the authors of \cite{CCESTW} for allowing us to reproduce the proof here. 

\begin{theorem}[\cite{CCESTW}]\label{th:CCESTW} 
	Let $\theta,\pi:A\rightarrow N$ be weakly nuclear \textasteriskcentered-homomorphisms from a C*-algebra $A$ to a finite von Neumann algebra $N$ that agree on traces (that is, $\tau\circ\theta= \tau\circ\pi$ for every trace $\tau$ on $N$). Then $\theta$ and $\pi$ are strong* approximately unitarily equivalent.
\end{theorem}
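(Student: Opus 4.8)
The plan is to reduce to the case of a II$_1$ factor and then to produce, for each finite subset and tolerance, a single unitary in $N$ that conjugates $\theta$ to $\pi$ up to that tolerance in the $2$-norm. Since a faithful normal trace $\tau$ is a separating family of one normal state, and $\|x\|_\tau=\sqrt{\tau((x^*x+xx^*)/2)}=\sqrt{\tau(x^*x)}$ coincides with the $2$-norm, the $\sigma$-strong$*$ topology agrees with the $2$-norm topology on bounded sets; hence such a net of unitaries is exactly a witness to strong$*$ approximate unitary equivalence, and no separate globalization step is needed.

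First I would put $N$ in convenient form. A finite von Neumann algebra with separable predual carries a faithful normal tracial state, and decomposing over its center via a direct integral writes $N=\int^{\oplus}N_t\,d\mu(t)$ with almost every $N_t$ either a matrix algebra or a II$_1$ factor. The hypothesis $\tau\circ\theta=\tau\circ\pi$ for all traces $\tau$ is equivalent to agreement on the center-valued trace, hence forces $\theta$ and $\pi$ to induce the same trace on almost every fiber; since inner automorphisms fix the center, it then suffices to conjugate the two maps fiberwise and reassemble the result into a measurable field of unitaries. This localizes the problem to a single II$_1$ factor $N$ with trace $\tau$, on which $\theta,\pi$ are weakly nuclear with $\tau\theta=\tau\pi=:\gamma$.

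The core is the factor-case estimate: for finite $F\subseteq A$ and $\epsilon>0$, find a unitary $u\in N$ with $\|u\theta(a)u^*-\pi(a)\|_2<\epsilon$ for $a\in F$. Here weak nuclearity enters twice. Using that both maps are amenable and share the trace $\gamma$, I would factor them approximately through a \emph{common} finite-dimensional algebra: produce $k$, a c.c.p. map $\alpha:A\to M_k(\Comp)$ adapted to the GNS data of $\gamma$, and c.c.p. maps $\beta_\theta,\beta_\pi:M_k(\Comp)\to N$ with $\beta_\theta\circ\alpha\approx_\epsilon\theta$ and $\beta_\pi\circ\alpha\approx_\epsilon\pi$ in $2$-norm on $F$. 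The trace agreement $\tau\beta_\theta\alpha\approx\gamma\approx\tau\beta_\pi\alpha$ matches the trace data of the two maps out of $M_k(\Comp)$; passing to the Choi elements of $\beta_\theta,\beta_\pi$ in $M_k(\Comp)\otimes N$ (equivalently, to Stinespring dilations) and invoking Murray–von Neumann comparison of projections of equal trace in the II$_1$ factor, I would produce $u\in N$ with $u\beta_\theta(\cdot)u^*\approx\beta_\pi(\cdot)$, whence $u\theta u^*\approx\pi$ on $F$. Varying $F$ and $\epsilon$ then gives the desired net.

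I expect the main obstacle to be the common finite-dimensional factorization: weak nuclearity supplies matrix factorizations of $\theta$ and of $\pi$ separately, but arranging a single $\alpha$ through which both factor approximately—so that equality of traces translates into matchable comparison data—is the delicate point, and is exactly where amenability of the associated $A$-$N$ correspondences and the identity $\tau\theta=\tau\pi$ must be used in tandem (this is the hands-on shadow of the uniqueness of amenable correspondences with prescribed coupling). Controlling the accumulation of $2$-norm errors through the dilation-and-comparison step, together with the measurable assembly of fiberwise unitaries in the reduction to factors, are the remaining technical hurdles.
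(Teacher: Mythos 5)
Your outline has a genuine gap at its core, and it sits exactly where you flag it: the ``common finite-dimensional factorization'' is not a delicate point to be smoothed over later, it is essentially the theorem itself. Weak nuclearity supplies factorizations of $\theta$ and $\pi$ \emph{separately}; producing a single $\alpha:A\to\mathbb{M}_k$ with $\beta_\theta\circ\alpha\approx\theta$ and $\beta_\pi\circ\alpha\approx\pi$ is (modulo easy manipulations) equivalent to the approximate unitary equivalence you are trying to prove, so the proposal is circular at that step. The two bridging steps after it also fail as stated. First, approximate agreement of $\tau\circ\beta_\theta\circ\alpha$ and $\tau\circ\beta_\pi\circ\alpha$ on a finite subset of $A$ gives no control over $\tau\circ\beta_\theta$ versus $\tau\circ\beta_\pi$ on all of $\mathbb{M}_k$ --- nothing forces $\alpha$ to be close to surjective, so the ``trace data out of $\mathbb{M}_k$'' need not match. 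Second, Choi elements of c.c.p.\ maps are positive elements, not projections, so Murray--von Neumann comparison does not apply to them directly; and comparing Stinespring projections in an amplification yields at best a partial isometry in $\mathbb{M}_k\otimes N$ (or a corner of it), not a unitary $u\in N$ satisfying $u\beta_\theta(\cdot)u^*\approx\beta_\pi(\cdot)$. Two c.c.p.\ maps $\mathbb{M}_k\to N$ with identical trace data are in general \emph{not} approximately unitarily conjugate, so equal traces cannot be converted into conjugacy at the level of the $\beta$'s without further structure.

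The paper's proof sidesteps all of this with a $2\times 2$ matrix trick in a tracial ultrapower, and it is worth seeing why that replaces your missing step. Fix a normal trace $\tau_0$ and reduce to the faithful case via GNS --- note this also removes the need for your separable-predual and direct-integral reduction (the paper never assumes $N$ has separable predual; your fiberwise scheme would additionally require weak nuclearity to pass to fibers and a measurable selection of unitaries, neither of which you address, and the paper's hypothesis quantifies over \emph{all} traces, not only normal ones). One then sets $\mu=\theta\oplus\pi:A\to\mathbb{M}_2(N^\omega_\tau)$, uses weak nuclearity to show $\mu(A)''$ is injective (a minimal-tensor-product continuity argument producing a conditional expectation onto the commutant, plus Connes' theorem), and shows the two diagonal projections $p_1,p_2$ in the relative commutant $\mathbb{M}_2(N^\omega_\tau)\cap\mu(A)'$ agree under every trace on that commutant --- the passage from agreement under normal traces against $\mu(A)''$ to agreement under all traces on the relative commutant is \cite[Lem.\ 4.5]{CETWW}, and this is precisely the step your ``amenability of the correspondences'' heuristic gestures at without supplying. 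Equal-trace projections in that finite algebra are unitarily equivalent, and the off-diagonal entry $u_{1,2}$ of the conjugating unitary is the desired intertwiner, unitary because $N^\omega_\tau$ is finite. If you want to salvage your outline, these two ingredients --- injectivity of $\mu(A)''$ from weak nuclearity, and the trace comparison in the relative commutant --- are what must replace the common factorization and the Choi-element comparison.
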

\begin{proof}
	Let normal trace $\tau_0$ on $N$ be given. 
	Replacing $N$ with $\pi_{\tau_0}(N)$ (where $\pi_\tau$ is the GNS representation corresponding to $\tau_0$) if necessary, we may assume $\tau_0$ is faithful. 
	We need to show that $\theta$ and $\pi$ are unitarily equivalent as maps into the tracial ultrapower $N^\omega_\tau$ of $N$ with respect to $\tau_0$.
	
	Define the weakly nuclear \textasteriskcentered-homomorphism $\mu:A\rightarrow\mathbb{M}_2(N^\omega_\tau)$ by 
	$$\mu(a)= \begin{bmatrix}\theta(a)&0\\0&\pi(a)\end{bmatrix}.$$
	We claim that $M:=\pi(A)''$ is hyperfinite.
	
	Indeed, let $\psi_n:A\rightarrow F_n$ and $\varphi_n:F_n\rightarrow M$ be c.c.p. maps for finite-dimensional C*-algebras $F_n$ such that $(\varphi_n\circ\psi_n)$ converges in the point-weak* topology to $\mu$. Fix a unital normal representation $M\subseteq\mathbb{B}(\mathcal{H})$, and define maps $\eta,\eta_n:A\otimes_{\mathrm{alg}} M'\rightarrow \mathbb{B}(\mathcal{H})$ by $\eta(a\otimes b)= \mu(a)b$ and ${\eta_n(a\otimes b)}= {\varphi_n\circ\psi_n(a)b}$, so that $(\eta_n)$ converges to $\eta$ in the point-weak* topology. 
	The $\eta_n$ are continuous with respect to the minimal tensor product since they factor through $F_k\otimes M'$, hence so is $\eta$. A conditional expectation of  $\mathbb{B}(\mathcal{H})$ onto $M'$ is then provided by \cite[Prop. 3.6.5]{BrownOzawa}, confirming that $M$ is injective, hence hyperfinite by Connes' Theorem \cite[Thm. 6]{Connes76}.
	
	Define projections 
	$$p_1= \begin{bmatrix}1&0\\0&0\end{bmatrix},\hspace{1cm}p_2= \begin{bmatrix}0&0\\0&1\end{bmatrix}$$ in $\mathbb{M}_2(N^\omega_\tau)\cap\pi(A)'$. Then $\tau(p_1x)= \tau(p_2x)$ for every normal trace $\tau$ and every $x\in\pi(A)$, hence also for every $x\in\pi(A)''$. By \cite[Lem. 4.5]{CETWW}, $\tau(p_1)= \tau(p_2)$ for every trace $\tau$ on $\mathbb{M}_2(N^\omega_\tau)\cap\pi(A)'$. Thus there is a unitary $u=[u_{i,j}]\in\mathbb{M}_2(N^\omega_\tau)\cap\pi(A)'$ such that $u^*p_1u=p_2$, hence $u_{1,1}= 0$ and $u_{1,2}^*u_{1,2}= 1_{N^\omega_\tau}$. 
	Therefore $u_{1,2}$ is unitary since $N^\omega_\tau$ is finite, and $\theta(a)u_{1,2}=u_{1,2}\pi(a)$ since $u\in\pi(A)'$.
\end{proof}

Most of the following lemma's proof is borrowed from \cite{BrownCarrionWhite}; improvements are due to mixing in material from \cite{CarrionSchafhauser} and Theorem~\ref{th:CCESTW}. To properly state the theorems, we need (the W* half of) the paper's central definition.

\begin{definition}\label{df:AMSSD}
	Let $\pi:A\rightarrow N$ be a \textasteriskcentered-homomorphism from a C*-algebra $A$ to a von Neumann algebra $N$. An \textbf{approximately multiplicative $\sigma$-strong*-decomposition} of $\pi$ is a net of u.c.p. maps $A\stackrel{\psi_n}{\longrightarrow}F_n\stackrel{\varphi_n}{\longrightarrow}N$ for finite-dimensional C*-algebras $F_n$ such that
	\begin{itemize}
		\item[(i)] $\varphi_n\circ\psi_n(x)\rightarrow \pi(x)$ in the $\sigma$-strong* topology for all $x\in A$,
		\item[(ii)] $\left\|\psi_n(x)\psi_n(y)-\psi_n(xy)\right\|\rightarrow 0$ for all $x,y\in A$,
		\item[(iii)] every $\varphi_n$ is a \textasteriskcentered-homomorphism.
	\end{itemize}
\end{definition}

\begin{lemma}\label{lm:vonNeumann}
	Let $A$ be a quasidiagonal C*-algebra and $\pi:A\rightarrow N$ be a weakly nuclear \textasteriskcentered-homomorphism such that, for every trace $\tau$ on $\pi(A)$, the trace $\tau\circ\pi$ is quasidiagonal. Then $\pi$ admits an approximately multiplicative $\sigma$-strong*-decomposition.
\end{lemma}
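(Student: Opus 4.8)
I want to produce, for a quasidiagonal $A$ and a weakly nuclear $\pi\colon A\to N$ with all traces $\tau\circ\pi$ quasidiagonal, a net of u.c.p. maps $\psi_n\colon A\to F_n$ and $\ast$-homomorphisms $\varphi_n\colon F_n\to N$ satisfying (i)–(iii) of Definition~\ref{df:AMSSD}. The strategy is to reduce to the finite-von-Neumann-algebra case where Theorem~\ref{th:CCESTW} applies, build an approximately multiplicative $\psi$ using quasidiagonality of $A$, and then correct the ``$\varphi$'' side to be a genuine $\ast$-homomorphism using a uniqueness/approximate-unitary-equivalence argument.

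---

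**Proof sketch.**

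The plan is to follow the template of \cite[Thm.\ 3.1]{BrownCarrionWhite}, upgrading the ingredients as indicated. First I would reduce to the case that $N$ is a finite von Neumann algebra with a faithful normal trace: by weak nuclearity $\pi(A)''$ is hyperfinite (as in the proof of Theorem~\ref{th:CCESTW}, via \cite[Prop.\ 3.6.5]{BrownOzawa} and Connes' theorem), and since a $\sigma$-strong*-decomposition can be assembled from pieces, it suffices to handle each trace-component separately. After cutting by a central projection and passing to a GNS representation, we may assume $N$ acts standardly with a faithful normal trace $\tau_0$, and the seminorms $\|\cdot\|_\rho$ from the excerpt govern the $\sigma$-strong* topology on bounded sets.

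Next I would use quasidiagonality of $A$ together with a faithful representation to produce, for each finite $F\subseteq A$ and $\varepsilon>0$, a u.c.p. map $\psi\colon A\to \mathbb{M}_k$ that is $(F,\varepsilon)$-multiplicative and $(F,\varepsilon)$-isometric; this is standard (Voiculescu). Composing with an embedding $\mathbb{M}_k\hookrightarrow N^\omega_{\tau}$ — which exists because the associated trace $\tau\circ\psi$ on $A$ is quasidiagonal hence, by a Weyl--von Neumann type argument and the hypothesis on $\tau\circ\pi$, can be matched against $\tau_0\circ\pi$ after amplification — gives a weakly nuclear $\ast$-homomorphism $\theta\colon A\to N^\omega_\tau$ that agrees on traces with $\pi$ (viewed into the ultrapower). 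Here is where Theorem~\ref{th:CCESTW} enters: it yields a unitary $u\in N^\omega_\tau$ with $u\,\theta(a)\,u^* = \pi(a)$ for all $a$. Representing $u$ by a bounded sequence $(u_n)$ of unitaries in $N$ and conjugating the finite-dimensional building maps by the $u_n$, one obtains $\varphi_n\colon F_n\to N$ which are genuine $\ast$-homomorphisms (conjugation preserves this) with $\varphi_n\circ\psi_n(x)\to\pi(x)$ in $\|\cdot\|_\rho$ for every normal state $\rho$, i.e.\ in $\sigma$-strong*; and the $\psi_n$ remain approximately multiplicative and u.c.p. A standard diagonal/reindexing argument over the directed set of pairs $(F,\varepsilon)$ converts the ultrapower statement back into an honest net, giving (i), (ii), (iii).

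**Main obstacle.** The delicate point is arranging that the $\ast$-homomorphism $\theta$ into the ultrapower \emph{agrees on traces} with $\pi$ — this is precisely what lets Theorem~\ref{th:CCESTW} fire. Approximate multiplicativity of $\psi_n$ only controls the trace $\tau\circ\varphi_n\circ\psi_n$ up to the errors that define the ultrapower, so one must check that the quasidiagonality hypotheses (on $A$ itself, to get the $(F,\varepsilon)$-isometric multiplicative maps, and on each $\tau\circ\pi$, to realize the resulting traces inside $N^\omega_\tau$ rather than some larger RFD/tracial object) combine to pin down the trace exactly in the limit. This is the step borrowed and strengthened from \cite{BrownCarrionWhite}; handling a possibly non-nuclear $A$ and a merely weakly nuclear $\pi$ — rather than the identity map on a nuclear algebra — is what forces the appeal to Theorem~\ref{th:CCESTW} in place of the earlier uniqueness results, and care is needed that all intermediate maps stay weakly nuclear so that theorem applies.
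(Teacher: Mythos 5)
Your treatment of the finite summand is essentially the paper's: follow \cite[Lem.\ 2.5]{BrownCarrionWhite} to build a \textasteriskcentered-homomorphism $\theta$ into a tracial ultrapower out of approximately multiplicative finite-dimensional pieces, check that $\theta$ agrees on traces with $\pi$ (this is exactly where the quasidiagonality of each $\tau\circ\pi$ is spent), invoke Theorem~\ref{th:CCESTW} to get strong* approximate unitary equivalence with $\pi^\omega$, conjugate the building blocks by representing unitaries, and unwind with Kirchberg's $\epsilon$-test. That part of your outline is sound, if schematic.

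The genuine gap is your opening reduction. A general von Neumann algebra $N$ splits as $N_1\oplus N_\infty$ with $N_\infty$ properly infinite, and a properly infinite von Neumann algebra admits \emph{no} faithful normal tracial state; there is no ``cutting by a central projection and passing to a GNS representation'' that lands you in a finite algebra with a faithful normal trace. Consequently $N^\omega_\tau$ does not exist for that summand and the entire CCESTW/tracial-ultrapower mechanism is unavailable there. The paper must therefore run a completely separate argument for $N_\infty$: weak nuclearity gives $\psi'\colon A\to\mathbb{M}_k$ and $\dot\varphi'\colon\mathbb{M}_k\to N$ approximating $\pi$ in $\|\cdot\|_\rho$; Voiculescu's theorem and quasidiagonality of $A$ produce a finite-rank projection $p$ making $\psi(a)=pap$ approximately multiplicative and compatible with $\psi'$; and then --- this is the step your proposal has no substitute for --- proper infiniteness of $N$ supplies a \emph{unital} embedding $\iota\colon\mathbb{B}(p\mathcal{H})\to N$, which Haagerup's results (\cite[Prop.\ 2.1 and p.\ 167]{Haagerup85}) convert into a \textasteriskcentered-homomorphism $\varphi=\Ad(u^*)\circ\iota$ that is $\|\cdot\|_\rho$-close to the c.p.\ map $\dot\varphi$. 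Without this branch your argument proves the lemma only for finite $N$, which excludes even $N=\mathbb{B}(\mathcal{H})$. You should either add the properly infinite case along these lines or explain why your intended reduction could be salvaged (it cannot, as stated).
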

\begin{remark}
	Note that the ``quasidiagonal $\tau\circ\pi$'' condition is satisfied if either every trace on $A$ or every trace on $\pi(A)$ is quasidiagonal. As Lemma~\ref{lm:trace} alludes, this property is essential. 
\end{remark}
\begin{proof}[Proof of Lemma~\ref{lm:vonNeumann}]
		Let $\mathcal{F}\subset A$ be a finite set of contractions, $\mathcal{S}$ a finite set of normal states on $N$, and $\epsilon>0$. 
	Define normal state $\rho= \sum_{\rho'\in\mathcal{S}}\rho'/|\mathcal{S}|$, so that $\|b\|_{\rho'}^2\leq |\mathcal{S}|\|b\|_\rho^2$ for every $\rho'\in\mathcal{S}$ and $b\in N$. Thus we need only make reference to $\rho$ in the proof, rather than the entire set $\mathcal{S}$. 
	
	$N$ may be decomposed into $N_{1}\oplus N_\infty$ for von Neumann algebras $N_{1},N_\infty$ that are respectively finite and properly infinite. Similarly, $\pi= \pi_{1}\oplus \pi_\infty$ for weakly nuclear \textasteriskcentered-homomorphisms $\pi_{1}:A\rightarrow N_{1}$ and $\pi_{\infty}:A\rightarrow N_{\infty}$. 
	We shall deal with each summand separately by assuming it composes all of $N$.

	\textbf{Suppose $N$ is properly infinite.} We begin by following along with the proof of \cite[Prop. 3]{CarrionSchafhauser}. 
	By weak nuclearity, there are $k\in\mathbb{Z}^+$ and u.c.p. maps $A\stackrel{\psi'}{\longrightarrow}\mathbb{M}_{k}\stackrel{\dot{\varphi}'}{\longrightarrow}N$ such that 
	$\|\dot{\varphi}'\circ\psi'(x)-\pi(x)\|_{\rho}\leq\epsilon$ for every $x\in\mathcal{F}$. Since $A$ is separable, we may fix a faithful unital representation $A\subseteq\mathbb{B}(\mathcal{H})$ such that $\mathcal{H}$ is separable and $A$ contains no nonzero compact operators. Voiculescu's Theorem \cite[Thm. II.5.3]{Davidson96} provides an isometry $v:\mathbb{C}^{k}\rightarrow\mathcal{H}$ such that $\|v^*xv-\psi'(x)\|\leq\epsilon$ for every $x\in\mathcal{F}$. Likewise, quasidiagonality provides a finite-rank projection $p\in \mathbb{B}(\mathcal{H})$ such that $\|pv-p\|\leq\epsilon$ and $\|px-xp\|\leq\epsilon$ for every $x\in\mathcal{F}$. Define u.c.p. maps $A\stackrel{\psi}{\longrightarrow}\mathbb{B}(p\mathcal{H})\stackrel{\dot{\varphi}}{\longrightarrow}N$ by $\psi(a)= pap$ and $\dot{\varphi}(T)= \dot{\varphi}'(v^*Tv)$. Thus, for all $x,y\in\mathcal{F}$,
	\begin{align*}
	\|\psi(x)\psi(y)-\psi(xy)\|&\leq \|p\|\|xp-px\|\|yp\|\leq\epsilon,\\
	\|\dot{\varphi}\circ\psi(x)-\pi(x)\|_{\rho}&
	\leq \|\dot{\varphi}'\|\|v^*\psi(x)v-\psi'(x)\|+\epsilon
	\\&\leq \|v^*p-v^*\|\|xpv\|+\|v^*x\|\|pv-v\|+2\epsilon\leq 4\epsilon.
	\end{align*}
	
	We now follow the proof of \cite[Lem. 2.4]{BrownCarrionWhite}, beginning on page 50 with the explanation that the reference itself is following the proof of \cite[Prop. 2.2]{Haagerup85}. Our properly infinite assumption finally kicks in, allowing us to find a unital embedding $\iota:\mathbb{B}(p\mathcal{H})\rightarrow N$
	. By \cite[Prop. 2.1]{Haagerup85} there is isometry $w\in N$ such that $\dot{\varphi}(T)= w^*\iota(T)w$ for all $T\in \mathbb{B}(p\mathcal{H})$, and \cite[Page 167]{Haagerup85} shows how $w$ may be approximated by a unitary $u$ so that 
	$$\|\Ad(u^*)\circ\iota(T)-\dot{\varphi}(T)\|_\rho\leq \|T\|\epsilon\leq \epsilon$$
	for every $T\in \psi(\mathcal{F})\subset \mathbb{B}(p\mathcal{H})$. The \textasteriskcentered-homomorphism $\varphi= \Ad(u^*)\circ\iota$ therefore completes this term of the net.
	
	\textbf{Now suppose $N$ is finite.} This proof follows exactly in the footsteps of that of \cite[Lem. 2.5]{BrownCarrionWhite}, up until its last two paragraphs. At that point we note that $\theta$ is nuclear by definition: we have finite-dimensional u.c.p. maps  $\widetilde{\psi}_n:=\bigoplus_{k=1}^n\psi_k:A\rightarrow\bigoplus_{k=1}^nF_k$ and \textasteriskcentered-homomorphisms  $\widetilde{\phi}_n:\bigoplus_{k=1}^nF_k\rightarrow N^\omega$ given by 
	\begin{align*}
	\widetilde{\phi}_n\big(\left(T_1,\ldots,T_n\right)\big)= q\big(\left(\phi_1(T_1),\ldots,
	\phi_{n-1}(T_{n-1}),\phi_{n}(T_{n}),\phi_{n}(T_n),\ldots \right)_n\big).
	\end{align*}
	Thus $\theta= \lim_\omega\phi_n\circ\psi_n= \lim_n\widetilde{\phi}_n\circ\widetilde{\psi}_n$. This allows us to use Theorem~\ref{th:CCESTW} to deduce that $\theta$ is strong* approximately unitarily equivalent to $\pi^\omega$, and the proof is finished using Kirchberg's $\epsilon$-test as in the original.	
\end{proof} 

We introduced this form of W*-decomposition solely to relate it to the following C*-counterpart:

\begin{definition}\label{df:AMND}
	Let $\pi:A\rightarrow B$ be a \textasteriskcentered-homomorphism between C*-algebras $A,B$. An \textbf{approximately multiplicative norm-decomposition} of $\pi$ is a sequence of u.c.p. maps $A\stackrel{\psi_n}{\longrightarrow}F_n\stackrel{\varphi_n}{\longrightarrow}B$ for finite-dimensional C*-algebras $F_n$ such that
	\begin{itemize}
		\item[(i)] $\left\|\varphi_n\circ\psi_n(x)-\pi(x)\right\|\rightarrow 0$ for all $x\in A$,
		\item[(ii)] $\left\|\psi_n(x)\psi_n(y)-\psi_n(xy)\right\|\rightarrow 0$ for all $x,y\in A$,
		\item[(iii)] every $\varphi_n$ is a convex combination of u.c.p. order zero maps.
	\end{itemize}
\end{definition}

\begin{proof}[Proof of Lemma~\ref{lm:trace}]
	The composition of a trace with an order zero map is again (rescalable into) a trace \cite[Cor. 3.4]{WinterZacharias}, so each $\tau\circ\varphi_n$ is a trace on the finite-dimensional C*-algebra $F_n$. By assumption, the $\psi_n$ are approximately multiplicative, and $\tau\circ\varphi_n\circ\psi_n\rightarrow \tau\circ\pi$ in the weak* topology. Therefore $\tau\circ\pi$ is quasidiagonal.
\end{proof}

\begin{proof}[Proof of Theorem~\ref{th:nonexact}]
	Let $A\stackrel{\psi_n}{\longrightarrow}F_n\stackrel{\varphi_n}{\longrightarrow}B^{**}$ be an approximately multiplicative $\sigma$-strong*-decomposition. 
	By Arveson's Extension Theorem we have conditional expectations $E_n:B^{**}\rightarrow\varphi_n(F_n)$. Moreover, by Alaoglu's Theorem we may pass $(E_n)$ to a subsequence that converges to a linear map $E:B^{**}\rightarrow B^{**}$ in the point-$\sigma$-weak topology (cf. \cite[Thm. 1.3.7]{BrownOzawa}).
	
	Let $\epsilon>0$, $a\in A$, and normal functional $\eta\in B^*$ all be given. 	
	Then $\eta\circ E$ is also a normal functional, so by the previous paragraph there exists $m\in\mathbb{N}$ such that $n>m$ implies each of the following hold:
	\begin{align*}
	\big|\eta\big(E_n\big(\pi(a)\big)-E\big(\pi(a)\big)\big)\big|&\leq\epsilon/4,\\
	\big|\eta\circ E\big(\pi(a)-\varphi_n\circ\psi_n(a)\big)\big|&\leq\epsilon/4,\\
	\big|\eta\big(E\big(\varphi_n\circ\psi_n(a)\big)-E_n\big(\varphi_n\circ\psi_n(a)\big)\big)\big|&\leq\epsilon/4,\\
	\big|\eta\big(\varphi_n\circ\psi_n(a)-\pi(a)\big)\big|&\leq\epsilon/4.
	\end{align*}
	Combined, we get
	\begin{align*}
	\big|\eta\big(E_n\circ\pi(a)-\pi(a)\big)\big|&\leq\epsilon,
	\end{align*}
	therefore the inclusion $\pi(A)\subseteq B^{**}$ is weakly nuclear. 
	
	We now use a Hahn-Banach argument akin to the proof (as in \cite[Prop. 2.3.6]{BrownOzawa}) that semidiscrete $A^{**}$ implies nuclear $A$. Let $X$ be the set of all maps from $\pi(A)$ to $B^{**}$ of the form $\varphi\circ\psi$ for u.c.p. maps $\psi:\pi(A)\rightarrow F$ and $\varphi:F\rightarrow B^{**}$ and finite-dimensional $F$. Then $X$ is convex.
	
	Indeed, let $w\in(0,1)$ and $\varphi_0\circ\psi_0,\varphi_1\circ\psi_{1}\in X$. Then $\psi:=\psi_0\oplus\psi_1:\pi(A)\rightarrow F_0\oplus F_1$ is u.c.p., as is the map $\varphi:F_0\oplus F_1\rightarrow B^{**}$ given by $\varphi\big((M_0,M_1)\big)= w\varphi_0(M_0)+(1-w)\varphi_1(M_1)$. Thus $w\varphi_0\circ\psi_0+(1-w)\varphi_1\circ\psi_1= \varphi\circ\psi\in X$.
	
	Let $\mathcal{F}=\{b_i~|~i\in[1,k]\}\subset \pi(A)$ be given. By weak nuclearity, the $k$-tuple $(b_i)_{i=1}^k\in \bigoplus_{i=1}^kB^{**}$ is in the weak-closure of the set $\left\{\left.\big(\varphi\circ\psi(b_i)\big)_{i=1}^k~\right|~\varphi\circ\psi\in X\right\}$, hence also in its norm-closure by the Hahn-Banach Theorem. Therefore there is a map $\varphi\circ\psi\in X$ such that $$\max_i\left\|b_i-\varphi\circ\psi(b_i)\right\|= \left\|(b_i)_{i=1}^k-\big(\varphi\circ\psi(b_i)\big)_{i=1}^k\right\|<\epsilon.$$
\end{proof}

\begin{proposition}\label{pr:different}
	Let $\pi:A\rightarrow B$ be a \textasteriskcentered-homomorphism between C*-algebras $A,B$.
	Then  $\pi$ admits an approximately multiplicative norm-decomposition iff $\pi:A\rightarrow B^{**}$ admits an approximately multiplicative $\sigma$-strong*-decomposition.
\end{proposition}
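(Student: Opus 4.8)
The plan is to prove the two implications separately, since one direction only needs that the $\sigma$-strong* topology is coarser than the norm topology, while the other must recover the norm topology from it. Throughout I would use the elementary observation that a u.c.p.\ order zero map between unital C*-algebras is exactly a unital \textasteriskcentered-homomorphism: its structural positive element $h=\varphi(1)$ equals the unit by the structure theorem for order zero maps \cite{WinterZacharias}. Thus condition (iii) of Definition~\ref{df:AMND} says precisely that each $\varphi_n=\sum_i\lambda_{n,i}\rho_{n,i}$ is a convex combination of \emph{unital \textasteriskcentered-homomorphisms} $\rho_{n,i}\colon F_n\to B$, and the only genuine differences between the two notions are norm versus $\sigma$-strong* convergence and a convex combination of such maps versus a single one.

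For the forward implication (norm-decomposition $\Rightarrow$ $\sigma$-strong*-decomposition) the crucial point is that, although a convex combination of distinct \textasteriskcentered-homomorphisms is never approximately multiplicative, its \emph{individual} homomorphic pieces $\rho_{n,i}\circ\psi_n$ already converge $\sigma$-strong* to the multiplicative limit $\pi$. I would make this precise with a variance estimate: writing $\Phi_n=\varphi_n\circ\psi_n$ and fixing a normal state $\rho$, for each $x\in A$ one has
\begin{equation*}
\sum_i\lambda_{n,i}\big\|\rho_{n,i}\psi_n(x)-\pi(x)\big\|_\rho^2\longrightarrow 0 .
\end{equation*}
Indeed, since each $\rho_{n,i}$ is multiplicative and $\psi_n$ is approximately multiplicative, expanding the left-hand side and using $\sum_i\lambda_{n,i}\rho_{n,i}\psi_n=\Phi_n$ shows it is asymptotically $\tfrac12\rho\big(\Phi_n(x^*x)+\Phi_n(xx^*)-\Phi_n(x)^*\Phi_n(x)-\Phi_n(x)\Phi_n(x)^*\big)$, which tends to $0$ because $\Phi_n\to\pi$ in norm and $\pi$ is a \textasteriskcentered-homomorphism. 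Given a finite $\mathcal F\subseteq A$, a finite family of normal states, and $\epsilon>0$, summing this finite data and choosing a below-average index $i_n$ produces a single unital \textasteriskcentered-homomorphism $\rho_{n,i_n}\colon F_n\to B\subseteq B^{**}$ with $\|\rho_{n,i_n}\psi_n(x)-\pi(x)\|_\rho<\epsilon$ on $\mathcal F$. Keeping $\psi_n$ and setting $\varphi_n:=\rho_{n,i_n}$ then assembles the required approximately multiplicative $\sigma$-strong*-decomposition.

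For the reverse implication I would run the Hahn--Banach/convexity argument already used in the proof of Theorem~\ref{th:nonexact}. Fix a finite $\mathcal F\subseteq A$ and $\delta>0$, and let $X$ be the set of maps $A\to B$ of the form $\varphi\circ\psi$ with $\psi$ u.c.p.\ and $\delta$-multiplicative on $\mathcal F$ and $\varphi$ a convex combination of u.c.p.\ order zero maps into $B$. As in that proof one checks $X$ is convex: given two such maps one takes the direct sum of the $\psi$'s and the evident convex combination of the $\varphi$'s, and both $\delta$-multiplicativity and the order zero structure survive (composing the order zero pieces with the coordinate projections, which are unital \textasteriskcentered-homomorphisms, keeps them order zero). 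Because the weak topology $\sigma(B,B^*)$ is the restriction of the $\sigma$-weak topology of $B^{**}$, it suffices to show $\pi$ lies in the point-$\sigma$-weak closure of $X$; Hahn--Banach then places it in the point-norm closure, and a diagonal argument over $\mathcal F$ and $\delta$ extracts the sequence of Definition~\ref{df:AMND}.

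The reverse implication therefore rests on a single local approximation, which I expect to be the main obstacle: \emph{each unital \textasteriskcentered-homomorphism $\varphi_n\colon F_n\to B^{**}$ is point-$\sigma$-weakly approximable by convex combinations of u.c.p.\ order zero maps $F_n\to B$.} Granting this, composing the approximants with $\psi_n$ (which is eventually $\delta$-multiplicative) yields elements of $X$ that are $\sigma$-weakly close to $\Phi_n=\varphi_n\psi_n$, hence to $\pi$. To prove the approximation I would factor $\varphi_n$ through its finite-dimensional image $D=\varphi_n(F_n)\subseteq B^{**}$ and reduce to approximating the inclusion $D\hookrightarrow B^{**}$, since precomposing with the surjection $F_n\twoheadrightarrow D$ preserves the order zero structure. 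Writing $D=\bigoplus_j M_{k_j}$, I would use Kaplansky density to approximate its matrix units, and its central projections, by elements of $B$, and assemble these into maps into $B$. The genuinely technical point --- and exactly where the convex combinations become unavoidable --- is that the central projections of $D$ need not lie in $B$, so the corner structure that would yield a single order zero map is available only approximately; converting the resulting approximately-order-zero data into \emph{honest} convex combinations of order zero maps into $B$, while controlling only the $\sigma$-weak rather than the norm distance, is the crux. I would handle it by passing to the structural positive elements and partial isometries in $B^{**}$, approximating them within $B$ via Kaplansky density, and using the projectivity of cones over finite-dimensional C*-algebras to perturb the approximate data to exact order zero maps.
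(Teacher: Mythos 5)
Your proposal is correct, and its forward direction takes a genuinely different---and more elementary---route than the paper's. Your starting observation is right: by the structure theorem for order zero maps \cite{WinterZacharias}, a u.c.p.\ order zero map out of a unital C*-algebra is precisely a unital \textasteriskcentered-homomorphism, so Definition~\ref{df:AMND}(iii) decomposes each $\varphi_n$ into homomorphic legs $\rho_{n,i}$. The paper proves the forward implication by rerunning the machinery of Lemma~\ref{lm:vonNeumann}: it splits $B^{**}$ into finite and properly infinite summands, handles the properly infinite part via \cite{Haagerup85} (replacing $\dot{\varphi}$ by $\Ad(u^*)\circ\iota$ for a unital embedding $\iota$), and handles the finite part with the tracial uniqueness result (Theorem~\ref{th:CCESTW}) together with the quasidiagonality of the traces $\tau\circ\pi$ supplied by Lemma~\ref{lm:trace}. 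Your variance extraction bypasses all of this: with $T_i=\rho_{n,i}\psi_n(x)$, $p=\pi(x)$, and $\bar T=\Phi_n(x)=\sum_i\lambda_{n,i}T_i$, the exact identity $\sum_i\lambda_{n,i}\|T_i-p\|_\rho^2=\sum_i\lambda_{n,i}\|T_i-\bar T\|_\rho^2+\|\bar T-p\|_\rho^2$ (the cross terms vanish by linearity), the multiplicativity of each $\rho_{n,i}$, the approximate multiplicativity of $\psi_n$, and the point-norm convergence $\Phi_n\to\pi$ with $\pi$ multiplicative together force the left side to $0$, and a below-average choice of $i_n$ over the finite data does the rest. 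This avoids the finite/properly-infinite case split, Haagerup's theorem, Theorem~\ref{th:CCESTW}, and quasidiagonal traces entirely, and even lands the homomorphisms $\varphi_n$ in $B$ rather than merely $B^{**}$; what the paper's heavier route buys is reuse, with no new work, of the template already built for Lemma~\ref{lm:vonNeumann}. Your backward direction, by contrast, is essentially the paper's: the ``crux'' approximation you flag---point-$\sigma$-weak approximation of a \textasteriskcentered-homomorphism $F_n\to B^{**}$ by order zero maps into $B$---is exactly \cite[Lem.\ 1.1]{HirshbergKirchbergWhite}, which the paper simply cites and whose published proof follows your sketch (projectivity of the cone over $F_n$ plus Kaplansky density), after which both you and the paper conclude with the same Hahn--Banach convexity argument. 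Two small corrections there: a single order zero map suffices at the approximation step, convexity entering only through Hahn--Banach, so convex combinations are not ``unavoidable'' where you place them; and you should attend (as the paper, in citing the lemma, arguably also should) to arranging unitality of the approximating order zero maps, since Definition~\ref{df:AMND}(iii) requires convex combinations of \emph{u.c.p.}\ order zero maps.
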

\begin{proof}
	$\mathbf{\Rightarrow}$: We retread the proof of Lemma~\ref{lm:vonNeumann}, beginning by letting normal state $\rho$, finite $\mathcal{F}\subset A$, and $\epsilon>0$ be given.

	We may skip the first paragraph of the properly infinite case, instead using the assumed approximately multiplicative norm-decomposition to provide $A\stackrel{\psi}{\longrightarrow}F\stackrel{\dot{\varphi}}{\longrightarrow}B$ that satisfy the necessary inequalities. We now need only use \cite{Haagerup85} to find a unitary $u\in N$ so that $\varphi= \Ad(u^*)\circ\iota$ approximates $\dot{\varphi}$ for a unital embedding $\iota:F\rightarrow B^{**}$. 
	
	The finite case does not require that $A$ be quasidiagonal. Of course, property (i) of our norm-decomposition witnesses the nuclearity of $\pi$, hence $\pi:A\rightarrow B^{**}$ is weakly nuclear. Therefore Lemma~\ref{lm:trace} allows us finish this case, and this direction.
	
	$\mathbf{\Leftarrow}$: This is a perturbation of \cite[Thm. 1.4]{HirshbergKirchbergWhite}. Let $A\stackrel{\psi_n}{\longrightarrow}F_n\stackrel{\dot{\varphi}_n}{\longrightarrow}B^{**}$ be an approximately multiplicative $\sigma$-strong*-decomposition.	Using \cite[Lem. 1.1]{HirshbergKirchbergWhite}, we can find order zero u.c.p. maps $\varphi_{n}:F_n\rightarrow B$ (note the range) such that $\varphi_n\circ\psi_n(x)\rightarrow \pi(x)$ in the $\sigma$-weak topology for every $x\in A$.  We once again conclude with a Hahn-Banach argument.  
%
\end{proof}

With the equivalence of these decompositions, we hope to find a way to characterize their existence. An important component seems to be that $\pi$ is a quasidiagonal \textasteriskcentered-homomorphism.

\begin{definition}\label{df:qdsh}
	A \textasteriskcentered-homomorphism $\pi:A\rightarrow B$ is \textbf{quasidiagonal} if it factors through a quasidiagonal C*-algebra $D$. That is, there exist \textasteriskcentered-homomorphisms $\pi_1:A\rightarrow D$ and $\pi_2:D\rightarrow B$ such that $\pi_1$ is surjective and $\pi= \pi_2\circ\pi_1$.
\end{definition}

\begin{definition}\label{df:lse}
	An extension 
	$$0\longrightarrow\ker\pi\longrightarrow A\longrightarrow B\longrightarrow 0$$
	is called \textbf{locally split} if every finite subset $\mathcal{G}\subset B$ of contractions 
	admits a u.c.p. \textbf{local lifting} $\lambda:\mathrm{span}(\mathcal{G})\rightarrow A$ such that $\pi\circ\lambda(b)= b$ for every $b\in \mathrm{span}\,\mathcal{G}$.
\end{definition}

For exact $A$, every such extension is locally split (see eg. \cite[Prop. 9.1.4]{BrownOzawa}). However, it is of note that the full power of exactness is not required for the following result:

\begin{proposition}\label{pr:locallysplit}
	Let $A$ be a C*-algebra, $\pi:A\rightarrow B$ be a nuclear, quasidiagonal \textasteriskcentered-homomorphism to another C*-algebra $B$, and $D,\pi_1,\pi_2$ be as in Definition~\ref{df:lse}. Further suppose that the trace $\tau\circ\pi$ is quasidiagonal for every trace $\tau$ on $\pi(A)$, and that the extension
	$$0\longrightarrow\ker\pi_1\longrightarrow A\longrightarrow D\longrightarrow 0$$
	is locally split. Then $\pi$ admits an approximately multiplicative decomposition.
\end{proposition}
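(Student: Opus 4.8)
The plan is to reduce the whole statement to a single application of Lemma~\ref{lm:vonNeumann} to the map $\pi_2:D\to B^{**}$, and then to transport the resulting decomposition back up to $A$ by precomposing with $\pi_1$ and invoking Proposition~\ref{pr:different}. Since $D$ is quasidiagonal, once I verify that $\pi_2:D\to B^{**}$ is weakly nuclear and that $\tau\circ\pi_2$ is a quasidiagonal trace on $D$ for every trace $\tau$ on $\pi_2(D)=\pi(A)$, Lemma~\ref{lm:vonNeumann} supplies an approximately multiplicative $\sigma$-strong*-decomposition $D\stackrel{\Psi_n}{\to}F_n\stackrel{\varphi_n}{\to}B^{**}$. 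Setting $\psi_n:=\Psi_n\circ\pi_1$ then gives an approximately multiplicative $\sigma$-strong*-decomposition of $\pi:A\to B^{**}$: property (i) holds because $\varphi_n\Psi_n(d)\to\pi_2(d)$ for every $d\in D$ and $\pi_1$ carries $A$ into $D$; property (ii) holds because $\pi_1$ is multiplicative and the $\Psi_n$ are approximately multiplicative; and (iii) is inherited verbatim. Proposition~\ref{pr:different} then converts this into the desired approximately multiplicative norm-decomposition of $\pi:A\to B$.

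It remains to check the two hypotheses of Lemma~\ref{lm:vonNeumann} for $\pi_2$, and the weak nuclearity of $\pi_2$ is where the local-splitting assumption is consumed. Given a finite set of contractions $\mathcal{G}\subset D$ containing the unit and $\epsilon>0$, local splitting of the extension $0\to\ker\pi_1\to A\to D\to 0$ supplies a u.c.p. local lifting $\lambda:\mathrm{span}(\mathcal{G})\to A$ with $\pi_1\circ\lambda=\mathrm{id}$. Feeding the finite set $\lambda(\mathcal{G})\subset A$ into the nuclearity of $\pi$ yields c.c.p. maps $A\stackrel{\alpha}{\to}\mathbb{M}_k\stackrel{\beta}{\to}B$ with $\|\beta\alpha(\lambda(g))-\pi(\lambda(g))\|<\epsilon$ for all $g\in\mathcal{G}$; since $\pi\circ\lambda=\pi_2\circ\pi_1\circ\lambda=\pi_2$ on $\mathrm{span}(\mathcal{G})$, this reads $\|\beta\alpha\lambda(g)-\pi_2(g)\|<\epsilon$. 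I would then extend the c.c.p. map $\alpha\circ\lambda:\mathrm{span}(\mathcal{G})\to\mathbb{M}_k$ to a c.c.p. map $\hat\alpha:D\to\mathbb{M}_k$ by Arveson's Extension Theorem (using injectivity of $\mathbb{M}_k$), so that $\beta\circ\hat\alpha:D\to B$ approximates $\pi_2$ to within $\epsilon$ on $\mathcal{G}$. Ranging over finite sets and $\epsilon$ exhibits $\pi_2:D\to B$ as nuclear, hence $\pi_2:D\to B^{**}$ as weakly nuclear. The essential point is that without local splitting there is no reason for the completely positive approximations of $\pi$ to descend to the quotient $D$.

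For the trace condition, fix a trace $\tau$ on $\pi(A)=\pi_2(D)$ and put $\sigma:=\tau\circ\pi_2$, a trace on $D$. By hypothesis $\sigma\circ\pi_1=\tau\circ\pi$ is a quasidiagonal trace on $A$, and I must deduce that $\sigma$ is quasidiagonal on $D$. I would do this by passing to the tracial ultraproduct $M^\omega:=\prod_\omega(\mathbb{M}_{k_n},\mathrm{tr})$: via the standard reformulation of trace quasidiagonality, the approximately multiplicative matricial approximations witnessing quasidiagonality of $\sigma\circ\pi_1$ assemble into a trace-preserving \textasteriskcentered-homomorphism $\Phi:A\to M^\omega$ with $\mathrm{Tr}_\omega\circ\Phi=\sigma\circ\pi_1$. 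For $c\in\ker\pi_1$ one has $\mathrm{Tr}_\omega(\Phi(c)^*\Phi(c))=\sigma\pi_1(c^*c)=0$, so faithfulness of the limit trace forces $\Phi(c)=0$; thus $\Phi$ annihilates $\ker\pi_1$ and descends to a trace-preserving \textasteriskcentered-homomorphism $\bar\Phi:D\to M^\omega$ with $\mathrm{Tr}_\omega\circ\bar\Phi=\sigma$, using surjectivity of $\pi_1$. Unwinding the correspondence back to approximately multiplicative matricial maps on $D$ shows $\sigma$ is a quasidiagonal trace, completing the verification.

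In summary, the architecture is to transport the approximation data from $A$ down to the quasidiagonal algebra $D$ (both weak nuclearity and trace quasidiagonality descend through the surjection $\pi_1$), to run Lemma~\ref{lm:vonNeumann} on $D$, and to lift back up by precomposing with $\pi_1$ before finishing with Proposition~\ref{pr:different}. I expect the main obstacle to be the descent of weak nuclearity, since this is exactly the step that uses the local-splitting hypothesis and fails without it; the trace descent, though it passes through the ultraproduct reformulation, is comparatively formal once one notices that the faithful limit trace annihilates $\ker\pi_1$.
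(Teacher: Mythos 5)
Your first two steps (the reduction scheme via Lemma~\ref{lm:vonNeumann} and Proposition~\ref{pr:different}, and the descent of nuclearity to $\pi_2$ via the local lifting $\lambda$ and Arveson's Extension Theorem) match the paper's proof. But your third step --- descending quasidiagonality of the trace $\sigma\circ\pi_1=\tau\circ\pi$ on $A$ to quasidiagonality of $\sigma=\tau\circ\pi_2$ on $D$ --- contains a genuine gap, and it is exactly the statement the paper's closing Remark identifies as \emph{not} established (``All that is needed for this is to show that the trace $\tau\circ\pi_2$ is quasidiagonal\ldots''). The problem is that quasidiagonality of a trace lives in the \emph{norm} ultraproduct $\prod_\omega\mathbb{M}_{k_n}$ (quotient of $\ell^\infty$ by sequences with $\lim_\omega\|x_n\|=0$): it is there that the approximately multiplicative (in operator norm) maps $\psi_n$ assemble into a trace-preserving \textasteriskcentered-homomorphism. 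On that ultraproduct the limit trace is \emph{not} faithful, so $\mathrm{tr}_\omega\big(\Phi(c^*c)\big)=0$ for $c\in\ker\pi_1$ does not force $\Phi(c)=0$, and $\Phi$ has no reason to descend to $D$. To get faithfulness you pass, as you do, to the tracial ($2$-norm) ultraproduct --- but that quotient destroys precisely the operator-norm multiplicativity: unwinding your $\bar\Phi:D\rightarrow M^\omega$ (even granting a u.c.p.\ lift, which is itself not automatic since the lift of $\Phi$ does not kill $\ker\pi_1$ in norm) recovers maps $D\rightarrow\mathbb{M}_{k_n}$ multiplicative only up to small $\|\cdot\|_{2,\mathrm{tr}}$, i.e., at best a witness that $\sigma$ is an amenable trace, not a quasidiagonal one. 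The two requirements --- a faithful trace to annihilate $\ker\pi_1$, and norm-multiplicativity to recover quasidiagonality --- live in different quotients and cannot be combined by this argument.

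The paper circumvents the issue by exploiting an asymmetry in the proof of Lemma~\ref{lm:vonNeumann}: the properly infinite case uses quasidiagonality of the domain algebra but makes no use of the trace hypothesis, while the finite case uses the quasidiagonal-trace hypothesis but (as noted explicitly in the proof of Proposition~\ref{pr:different}) does not require the domain to be quasidiagonal. So after decomposing $B^{**}$ into finite and properly infinite summands, the paper runs the properly infinite case on $\pi_2:D\rightarrow B^{**}$ (where quasidiagonality of $D$ is available and no trace condition is needed) and precomposes the resulting $\psi_n$ with $\pi_1$, while it runs the finite case directly on $\pi:A\rightarrow B^{**}$, where the hypothesis that every $\tau\circ\pi$ is quasidiagonal applies verbatim --- no descent of trace quasidiagonality to $D$ is ever needed. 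Replacing your single application of Lemma~\ref{lm:vonNeumann} to $\pi_2$ with this finite/properly-infinite split repairs the argument; as written, your proof claims an intermediate statement that the author of the paper regards as open.
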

\begin{proof}
	Let $\epsilon>0$ and finite subset $\mathcal{G}\subset D$ of contractions 
	be given. Let $\lambda:\mathrm{span}(\mathcal{G})\rightarrow A$ be a local lifting. 
	By nuclearity, there are $i\in\mathbb{N}$ and u.c.p. maps $A\stackrel{\psi}{\longrightarrow}\mathbb{M}_i\stackrel{\dot{\varphi}}{\longrightarrow}B$ such that $\big\|\dot{\varphi}\circ\psi(a)-\pi(a)\big\|<\epsilon$ for every $a\in \lambda(\mathcal{G})$. Arveson's Extension Theorem then provides u.c.p. $\psi':D\rightarrow \mathbb{M}_i$ that agrees with $\psi\circ\lambda$ on $\mathrm{span}(\mathcal{G})$. Thus, for any $d\in \mathcal{G}$,
	$$\big\|\dot{\varphi}\circ\psi'(d)-\pi_2(d)\big\|= \big\|\dot{\varphi}\circ\psi\big(\lambda(d)\big)-\pi\big(\lambda(d)\big)\big\|<\epsilon.$$
	
	From this we conclude that $\pi_2$ is nuclear. This allows us to apply the properly infinite case of Lemma~\ref{lm:vonNeumann} to get approximately multiplicative decompositions of $\pi_2:D\rightarrow B^{**}$. This case may then be extended to work for $A$ itself by replacing the resultant $\psi_n$ with $\psi_n\circ\pi_1$. The finite case goes through for $A$ unmodified, resulting in approximately multiplicative decompositions of $\pi:A\rightarrow B^{**}$.
\end{proof}

However, exactness does provide us with the converse statement.

\begin{proof}[Proof of Theorem~\ref{th:exact}]
	As mentioned, Proposition~\ref{pr:locallysplit} already provides the backward direction, so we need now only address the forward one. Let 
	$A\stackrel{\psi_n}{\longrightarrow}F_n\stackrel{\varphi_n}{\longrightarrow}B$ be the approximately multiplicative norm-decomposition. 
	The decomposition itself witnesses the nuclearity of $\pi$. The quasidiagonality of every $\tau\circ\pi$ was shown in Lemma~\ref{lm:trace}. The quasidiagonality of $\pi$ itself is a consequence of \cite[Thm. 4.8]{Dadarlat97}; for the convenience of the reader, we present a distillation of the proof.
	
	Since $A$ is exact, we may treat it as a C*-subalgebra of some C*-algebra $C$ such that the inclusion is nuclear. Using Arveson's Extension Theorem, we may treat the $\psi_n$'s domains as $C$. They induce a u.c.p. map $\Psi$ from $C$ to an ultraproduct $\prod_\omega F_n$, where $(\psi_n(a))_n$ is a representative sequence of $\Psi(a)$. Note that the approximate multiplicativity of $(\psi_n)$ makes $\Psi|_A$ a \textasteriskcentered-homomorphism. Likewise, we have a c.c.p. map $\Phi$ from $\prod_\omega F_n$ to the ultrapower $B^\omega$ given by $\Phi\big((T_n)_n\big)= \big(\varphi_n(T_n)\big)_n$. Thus $\Phi\circ\Psi(a)= \big(\varphi_n\circ\psi_n(a)\big)_n= \big(\pi(a)\big)_n$, which we may identify with $\pi(a)$ itself by treating $B$ as a C*-subalgebra of $B^\omega$ through constant sequences. Thus $\Phi|_{\Psi(A)}$ must also be a \textasteriskcentered-homomorphism.
	
	Let $\epsilon>0$ and finite subset $\mathcal{G}\subset\Psi(A)$ of contractions 
	be given. Also let $\lambda:\mathrm{span}(\mathcal{G})\rightarrow A$ be a local lifting of $\Psi$. By nuclearity of $A\subseteq C$, there exist a finite-dimensional C*-algebra $G$ and c.c.p. maps $A\stackrel{\theta}{\longrightarrow}G\stackrel{\xi}{\longrightarrow}C$ such that, for every $d\in\mathcal{G}$,
	$$\|(\Psi\circ\xi)\circ(\theta\circ\lambda)(d)-d\|= \|\Psi\circ\xi\circ\theta\circ\lambda(d)-\Psi\circ\lambda(d)\|\leq \|\xi\circ\theta\circ\lambda(d)-\lambda(d)\|<\epsilon.$$
	Another use of Arveson's Extension Theorem yields $\theta':\Psi(A)\rightarrow G$ with restriction $\theta'|_\mathcal{G}= \theta\circ\lambda$. 
	
	Thus the inclusion $\Psi(A)\subseteq \prod_\omega F_n$ is nuclear. By the Choi-Effros Lifting Theorem \cite[Thm. C.3]{BrownOzawa}, said inclusion lifts to a c.c.p. map to $\prod_n F_n$, therefore $\Psi(A)$ is quasidiagonal (see eg. \cite[Exc. 7.1.3]{BrownOzawa}).
\end{proof}

\begin{remark}	
	It seems probable to the author that this theorem may be strengthened to show that $\pi$ admits an approximately multiplicative decomposition iff it factors through a quasidiagonal C*-algebra $D$ via $A\stackrel{\pi_1}{\rightarrow}D\stackrel{\pi_2}{\rightarrow}B$ such that $\pi_2$ admits an approximately multiplicative decomposition. All that is needed for this is to show that the trace $\tau\circ\pi_2$ is quasidiagonal for every trace $\tau$ on $\pi_2(D)= \pi(A)$. This is of course satisfied if every trace on $B$ is quasidiagonal, but the common theme of these results has been moving requirements away from the C*-algebras and onto the \textasteriskcentered-homomorphism itself.
\end{remark}

\section*{Acknowledgements}

The author would like to thank his advisor, Jos\'e Carri\'on, for much guidance 
and---alongside with Christopher Schafhauser---for asking the question that led to this paper.

\bibliographystyle{amsalpha}
\bibliography{DAW}

\end{document}